\font\smallit=cmti10
\font\smalltt=cmtt10
\renewcommand\section{\@startsection {section}{1}{\z@}
{-30pt \@plus -1ex \@minus -.2ex}
{2.3ex \@plus.2ex}
{\normalfont\normalsize\bfseries}}
\renewcommand\subsection{\@startsection{subsection}{2}{\z@}
{-3.25ex\@plus -1ex \@minus -.2ex}
{1.5ex \@plus .2ex}
{\normalfont\normalsize\bfseries}}
\renewcommand{\@seccntformat}[1]{\csname the#1\endcsname. }
\newtheorem{theorem}{Theorem}
\newtheorem{lemma}{Lemma}
\newtheorem{corollary}{Corollary}
\newcommand{\beq}{\begin{equation}}
\newcommand{\eeq}{\end{equation}}
\def\a{\alpha}
\def\({\left(}
\def\){\right)}
\begin{document}

\begin{center}
\uppercase{\bf A method for obtaining Fibonacci identities}
\vskip 20pt
{\bf Dmitry I. Khomovsky}\\
{\smallit Lomonosov Moscow State University}\\
{\tt khomovskij@physics.msu.ru}
\end{center}
\vskip 30pt

\centerline{\smallit Received: , Revised: , Accepted: , Published: } 
\vskip 30pt

\centerline{\bf Abstract}

\noindent
For the Lucas sequence $\{U_{k}(P,Q)\}$ we discuss the identities such as the well-known \mbox{Fibonacci} identities. We also propose a method for obtaining identities involving recurrence sequences, with the help of which we find an interpolating-type identity for second-order linear recurrence sequences.

\pagestyle{myheadings}
\markright{\smalltt INTEGERS: 18 (2018)\hfill}
\thispagestyle{empty}
\baselineskip=12.875pt
\vskip 30pt

\section{Introduction}
Let $\mathbb{F}$ be an arbitrary field and $P, Q$ be its nonzero elements.  The Lucas sequences $\{U_{k}(P,Q)\}$, $\{V_{k}(P,Q)\}$ are defined recursively by
\beq \label{DefLuc}
f_{k+2}=P f_{k+1}-Q f_{k},
\eeq
with the initial values\footnote{In this paper instead of $U_{k}(P,Q)$ and $V_{k}(P,Q)$ we will write $U_k$ and $V_k$, if it is not ambiguous.} $U_{0}=0,\ U_{1}=1,\, V_{0}=2,\, V_{1}=P$. The characteristic equation of the recurrence relation $(\ref{DefLuc})$ is $x^2-P x+Q=0$. Its roots are  $\a=\frac{P+\sqrt{\Delta}}{2}$ and $\bar{\a}=\frac{P-\sqrt{\Delta}}{2}$, where $\Delta=P^2-4Q$. If $\Delta\not=0$, the Lucas sequences can be expressed in terms of $\a$ and $\bar{\a}$ according to Binet formulas
\beq \label{BF}
U_{k}=\frac{\a^k-\bar{\a}^k}{\a-\bar{\a}},\,\,\,V_{k}=\a^k+\bar{\a}^k,\,\,\, P=\a+\bar{\a},\,\,\, Q=\a\bar{\a},\,\,\,\sqrt{\Delta}=\a-\bar{\a}.
\eeq
We define $U_{-k}=-U_{k}/Q^k$ and $V_{-k}=V_{k}/Q^k$ for $k\geq1$. Then Binet formulas
are valid for every $k\in \mathbb{Z}$. More information about the subject can be found in \cite{-2,-1,13}.

The sequence   of the Fibonacci numbers $\{F_{k}\}$   is defined by the recurrence relation $F_{k+2}=F_{k+1}+F_{k}$ $(F_0=0$, $F_1=1)$ \cite{1}. From this definition, it follows that $F_k=U_{k}(1,-1)$. There are many identities involving the Fibonacci numbers \cite{-3,-4}.  Some of them are derived from identities involving $\{U_{k}\}$ if we put $P=1$, $Q=-1$. In this paper, we generalize some identities for $\{F_{k}\}$ in terms of $\{U_{k}\}$. Often  such generalized identities have a form close to the initial one. We also present a method for obtaining identities involving recurrence sequences. To show the efficiency of this method we obtain some identities for the Fibonacci numbers. The most interesting result is presented in Theorem 2.

\section{\bf{Generalizations of Fibonacci identities}}
It is clear that any identity which holds  for $\{U_{k}\}$ can be easy transformed into an identity for $\{F_{k}\}$. But there exist  identities involving $\{F_{k}\}$ for which  analogues in terms of $\{U_{k}\}$ are unknown. We note that the discussion fits in the literature \cite{1.10.1, 1.4}. For example,  Candido's identity \cite{1.3} was generalized in \cite{1.1}. This result is as follows:

\smallskip
\noindent$(F.1)$\phantom{$(GF)$}$2(F_{k}^4+F_{k+1}^4+F_{k+2}^4)=(F_{k}^2+F_{k+1}^2+F_{k+2}^2)^2$,

\smallskip
\noindent$(GF.1)$\phantom{$(F)$}$2(Q^4 U_{k}^4+P^4 U_{k+1}^4+U_{k+2}^4)=(Q^2 U_{k}^2+P^2 U_{k+1}^2+U_{k+2}^2)^2$.

\smallskip
\noindent Catalan's identity and its generalization \cite{-3} are:

\smallskip
\noindent$(F.2)$\phantom{$(GF)$}$F_{k}^2-F_{k+n}F_{k-n}=(-1)^{k-n}F_{n}^2$,

\smallskip
\noindent$(GF.2)$\phantom{$(F)$}$U_{k}^2-U_{k+n}U_{k-n}=Q^{k-n}U_{n}^2$.

\smallskip

\subsection{Two important identities for $\{U_{k}(P,Q)\}$}
For the Fibonacci numbers  the following holds:

\smallskip
\noindent$(F.3)$\phantom{$(GF)$}$F_{2k+1}=F_{k+1}^2+F_{k}^2$,

\smallskip
\noindent$(F.4)$\phantom{$(GF)$}$F_{2k}=F_{k}(2 F_{k+1}-F_{k})$.

\smallskip
\noindent The generalizations of $(F.3)$, $(F.4)$ are:

\smallskip
\noindent$(GF.3)$\phantom{$(F)$}$U_{2k+1}=U_{k+1}^2-Q U_{k}^2$,

\smallskip
\noindent$(GF.4)$\phantom{$(F)$}$U_{2k}=U_{k}(2 U_{k+1}-P U_{k})$.
\begin{lemma}\label{L1}
Let $\{U_{k}\}$ be the Lucas sequence with parameters $P, Q$ in an arbitrary field $\mathbb{F}$. Then $(GF.3)$ and $(GF.4)$ are valid.\end{lemma}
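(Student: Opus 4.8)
The plan is to verify both identities directly from the Binet formulas $(\ref{BF})$. Writing everything in terms of $\a$ and $\bar{\a}$ turns each claimed identity into an elementary algebraic manipulation, with the relations $Q=\a\bar{\a}$ and $P=\a+\bar{\a}$ doing all the work. Since the Binet formulas are stated for every $k\in\mathbb{Z}$, this will establish both identities for all integer indices at once.

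For $(GF.3)$ I would substitute the Binet expressions for $U_{k+1}$ and $U_k$ into the right-hand side. Expanding $U_{k+1}^2$ and $Q\,U_k^2$ over the common denominator $(\a-\bar{\a})^2$ and using $Q=\a\bar{\a}$, the mixed terms $-2(\a\bar{\a})^{k+1}$ coming from the two squares cancel, and the surviving numerator regroups as $(\a-\bar{\a})(\a^{2k+1}-\bar{\a}^{2k+1})$. Dividing by $(\a-\bar{\a})^2$ leaves exactly $U_{2k+1}$, as required.

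For $(GF.4)$ I would first isolate the inner factor. A short Binet computation using $P=\a+\bar{\a}$ shows that $2U_{k+1}-P\,U_k=V_k$: after clearing the denominator $\a-\bar{\a}$, the numerator $2(\a^{k+1}-\bar{\a}^{k+1})-(\a+\bar{\a})(\a^k-\bar{\a}^k)$ collapses to $(\a-\bar{\a})(\a^k+\bar{\a}^k)$. It then remains to note the product rule $U_kV_k=U_{2k}$, which is immediate from $(\ref{BF})$ because $(\a^k-\bar{\a}^k)(\a^k+\bar{\a}^k)=\a^{2k}-\bar{\a}^{2k}$. Combining the two gives $U_k(2U_{k+1}-P\,U_k)=U_kV_k=U_{2k}$.

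The only genuine subtlety is the degenerate case $\Delta=0$, where $\a=\bar{\a}$ and the Binet formulas $(\ref{BF})$ are unavailable, whereas the lemma asserts validity over an \emph{arbitrary} field. I expect this to be the main (and essentially only) obstacle, and I would dispose of it by a specialization argument: each $U_k$ is a polynomial in $P,Q$ with integer coefficients (a Laurent polynomial in $Q$ for $k<0$, since $Q\neq0$), so both $(GF.3)$ and $(GF.4)$ are identities in $\mathbb{Z}[P,Q,Q^{-1}]$. Having verified them on the Zariski-open locus $\Delta\neq0$ (for instance over $\mathbb{C}$), they hold identically in $\mathbb{Z}[P,Q,Q^{-1}]$ and therefore remain valid after reduction into any field, including when $\Delta=0$. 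Alternatively, one can bypass Binet entirely and prove both identities by induction on $k$ directly from the recurrence $(\ref{DefLuc})$, which is valid in any field with no case distinction.
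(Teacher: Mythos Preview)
Your argument is correct, but the paper takes a different and somewhat slicker route: it invokes the addition formula $U_{n+m}=U_nU_{m+1}-QU_mU_{n-1}$ as a known identity. Setting $n=k+1$, $m=k$ gives $(GF.3)$ immediately; setting $n=m=k$ gives $U_{2k}=U_k(U_{k+1}-QU_{k-1})$, and then the recurrence rewritten as $QU_{k-1}=PU_k-U_{k+1}$ yields $(GF.4)$. Because the addition formula holds in any field with no hypothesis on $\Delta$, this avoids Binet entirely and hence needs no separate treatment of the degenerate case, whereas your approach must patch $\Delta=0$ with a Zariski/specialization argument (or fall back on the induction you mention). On the other hand, your proof is more self-contained, since the addition formula is itself a nontrivial input (usually proved via Binet or the matrix method anyway), and your decomposition $2U_{k+1}-PU_k=V_k$ together with $U_kV_k=U_{2k}$ gives a conceptually clean reading of $(GF.4)$ that the paper's manipulation does not make explicit.
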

\begin{proof} Consider the well-known identity $U_{n+m}=U_{n}U_{m+1}-Q U_{m}U_{n-1}$. If we put $n=k+1$ and $m=k$, then we obtain $(GF.3)$. If we put $n=k$ and $m=k$, then we obtain $U_{2k}=U_{k}(U_{k+1}-Q U_{k-1})$. If we use  $Q U_{k-1}=P U_{k}-U_{k+1}$ in the previous equality, then we obtain $(GF.4)$.
\end{proof}
These two identities can be found in the literature: both identities are presented in \cite{1.5},  $(GF.3)$ is in \cite{1.7, 1.6}.
To show their importance, we note that $(GF.3)$, $(GF.4)$ can be used to calculate the Lucas sequences.
This is an alternative to the more known method \cite{1.8}, which uses the properties of both sequences $\{U_{k}\}$ and $\{V_{k}\}$.
\subsection{Higher order Fibonacci identities}
The following identities hold:

\smallskip
\noindent$(F.5)$\phantom{$(GF)$}$F_{3k}=F_{k+1}^3+F_{k}^3-F_{k-1}^3$,

\smallskip
\noindent$(F.6)$\phantom{$(GF)$}$F_{4k}=F_{k+1}^4+2F_{k}^4-F_{k-1}^4+4F_{k}^3 F_{k-1}$,

\smallskip
\noindent$(F.7)$\phantom{$(GF)$}$F_{5k}=F_{k+1}^5+4F_{k}^5-F_{k-1}^5+10F_{k+1}F_{k}^3 F_{k-1}$.

\smallskip
\noindent The reader can find $(F.5)$, $(F.6)$ in \cite{1.9,1.10}. The generalizations of the above are:

\smallskip
\noindent$(GF.5)$\phantom{$(F)$}$U_{3k}=U_{k+1}^3/P+(U_3-P^2) U_{k}^3+Q^3 U_{k-1}^3/P$,

\smallskip
\noindent$(GF.6)$\phantom{$(F)$}$U_{4k}=U_{k+1}^4/P+(U_4-P^3)U_{k}^4-Q^4 U_{k-1}^4/P+4Q^2U_{k}^3 U_{k-1}$,

\smallskip
\noindent$(GF.7)$\phantom{$(F)$}$U_{5k}=U_{k+1}^5/P+(U_5-P^4)U_{k}^5+Q^5 U_{k-1}^5/P+10Q^2 U_{k+1} U_{k}^3 U_{k-1}$.

\smallskip
\noindent One way to prove  $(GF.5)$-$(GF.7)$ is to use $(\ref{BF})$. But obtaining such identities by using the Binet formulas is difficult. In the next section we discuss  how we obtain similar identities.
\section{A new method for obtaining identities for linear recurrences}
First we consider the matrix method that is often used to prove some identities concerning the generalized Fibonacci and Lucas numbers \cite{1.7,1.5}. We have the following matrix formulas:
\beq\label{M0}
\begin{pmatrix}
U_{k+1} & V_{k+1} \\
U_{k} & V_{k}
\end{pmatrix}=M^{k}\begin{pmatrix}
1 & P \\
0 & 2
\end{pmatrix},\,\,\,\,\,\,\,\,\text{where} \,\,
M=\begin{pmatrix}
P & -Q \\
1 & 0
\end{pmatrix},
\eeq
\beq\label{M}
\begin{pmatrix}
U_{k} & U_{k+1} \\
V_{k} & V_{k+1}
\end{pmatrix}=\begin{pmatrix}
0 & 1 \\
2 & P
\end{pmatrix}{R}^{k},\,\,\,\,\,\,\,\,\,\, \text{where}\,\,
R=\begin{pmatrix}
0 & -Q \\
1 & P
\end{pmatrix}.
\eeq
\begin{theorem}\label{T1}
Let $\{U_{r}(P,Q)\}$, $\{V_{r}(P,Q)\}$  be the Lucas sequences with $P, Q\in\mathbb{F}$. Then there exist the following representations of $U_{mk}$ and $V_{mk}$ via $U_k$, $U_{k+1}$:
\beq\label{Rep}
U_{mk}=\sum_{i=0}^{m}\binom{m}{i}(-1)^{i+1}U_i U_k^i U_{k+1}^{m-i},
\eeq
\beq\label{Rep1}
V_{mk}=\sum_{i=0}^{m}\binom{m}{i}(-1)^{i}V_i U_k^i U_{k+1}^{m-i},
\eeq
where $m\in \mathbb{Z^+}$, $k\in\mathbb{Z}$ and such that $U_k U_{k+1}\not =0$.
Moreover, there are no other representations of the form $\sum_{i=0}^{m} c_i U_k^i U_{k+1}^{m-i}$, where $c_i$ are coefficients that do not depend on $k$.\end{theorem}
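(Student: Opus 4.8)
The plan is to prove the two representations by the matrix formalism of $(\ref{M0})$ and to settle the ``moreover'' clause by a separate linear-algebra argument, deliberately avoiding $(\ref{BF})$ as the paper suggests. The engine is the observation that a single fixed matrix controls all the powers $M^{k}$. Setting $N=\begin{pmatrix}0 & Q\\ -1 & P\end{pmatrix}$, a one-line induction on $k$ (base case $M^{0}=I$, inductive step using $M=PI-N$ together with the Cayley--Hamilton relation $N^{2}=PN-QI$) gives the key identity
\beq
M^{k}=U_{k+1}I-U_{k}N,
\eeq
equivalently the familiar closed form with entries $U_{k+1},-QU_{k}$ on top and $U_{k},-QU_{k-1}$ below. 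The virtue of this form is that $N$ does not depend on $k$, so $U_{k}$ and $U_{k+1}$ behave as genuine scalar variables.

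Since $I$ and $N$ commute, I would then expand $M^{mk}=(M^{k})^{m}=(U_{k+1}I-U_{k}N)^{m}$ by the binomial theorem to obtain $M^{mk}=\sum_{i=0}^{m}\binom{m}{i}(-1)^{i}U_{k}^{i}U_{k+1}^{m-i}N^{i}$. A second induction gives $N^{i}=U_{i}N-QU_{i-1}I$, whence $(N^{i})_{21}=-U_{i}$ and $\operatorname{tr}N^{i}=PU_{i}-2QU_{i-1}=V_{i}$. Comparing the $(2,1)$-entry of $M^{mk}=U_{mk+1}I-U_{mk}N$ with the expansion yields $(\ref{Rep})$ immediately, while comparing traces (using $\operatorname{tr}M^{mk}=U_{mk+1}-QU_{mk-1}=V_{mk}$) yields $(\ref{Rep1})$. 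The whole computation is polynomial in $P,Q$, so it is valid over any field and requires no assumption $\Delta\neq0$.

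For the uniqueness clause, suppose $\sum_{i=0}^{m}c_{i}U_{k}^{i}U_{k+1}^{m-i}=U_{mk}$ for all admissible $k$. Subtracting $(\ref{Rep})$ gives $\sum_{i=0}^{m}d_{i}U_{k}^{i}U_{k+1}^{m-i}=0$; dividing by $U_{k+1}^{m}$ (legitimate since $U_{k+1}\neq0$) converts this into the one-variable relation $\sum_{i=0}^{m}d_{i}t_{k}^{i}=0$ with $t_{k}=U_{k}/U_{k+1}$. A polynomial of degree at most $m$ vanishing at more than $m$ distinct points is identically zero, so it suffices that the ratios $t_{k}$ take more than $m$ distinct values. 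Here I would invoke the d'Ocagne-type identity $U_{k}U_{k'+1}-U_{k+1}U_{k'}=Q^{k'}U_{k-k'}$, which shows $t_{k}=t_{k'}$ forces $U_{k-k'}=0$; thus distinct indices give distinct ratios as soon as the sequence has no spurious zeros. This forces every $d_{i}=0$ and pins down the representation.

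The main obstacle is exactly this last point: the clean uniqueness statement hinges on the $t_{k}$ realizing enough distinct values, which can fail in degenerate cases (e.g.\ over a very small finite field, or when $\Delta=0$ makes the $U_{k}$ repeat). The honest reading is that uniqueness holds \emph{generically} --- treating $P,Q$ as indeterminates over the prime field, or equivalently restricting to the non-degenerate regime where $U_{j}\neq0$ for $j\neq0$ --- where the distinctness of the $t_{k}$ is automatic. The existence half, by contrast, is unconditional.
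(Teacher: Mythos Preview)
Your existence argument is essentially the paper's: write $M^{k}=U_{k+1}I+U_{k}A$ (your $N$ is exactly $-A$), expand $(M^{k})^{m}$ binomially, then read off the $(2,1)$-entry for $(\ref{Rep})$ and the trace for $(\ref{Rep1})$. The only cosmetic difference is that the paper obtains $A^{i}$ via the companion-matrix formula $(\ref{fM4})$ together with the sign rule $U_{m}(-P,Q)=(-1)^{m+1}U_{m}(P,Q)$, whereas you get $N^{i}=U_{i}N-QU_{i-1}I$ by a direct induction; the resulting entries and traces agree.

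For the uniqueness clause the two proofs genuinely diverge. The paper specializes to $P=2$, $Q=1$, where $U_{k}=k$, and then argues by divisibility modulo $k$ that any two coefficient systems must agree term by term. You instead stay with the given $P,Q$: dividing by $U_{k+1}^{m}$ turns the vanishing relation into a polynomial of degree at most $m$ in $t_{k}=U_{k}/U_{k+1}$, and the d'Ocagne identity $U_{k}U_{k'+1}-U_{k+1}U_{k'}=Q^{k'}U_{k-k'}$ shows the $t_{k}$ are pairwise distinct whenever $U_{k-k'}\neq 0$, giving enough evaluation points to force the polynomial to zero. Your route has the advantage that it works for the \emph{fixed} $P,Q$ at hand (under the non-degeneracy you correctly flag), while the paper's specialization really only establishes uniqueness generically---for coefficients that are polynomial in $P,Q$---and tacitly uses an integrality/characteristic-zero argument without saying so. Your caveat about small fields or $\Delta=0$ is well taken; the paper's version is subject to the same limitation but does not acknowledge it.
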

\begin{proof}
By $(\ref{M0})$, we have
\beq\label{fM0}
M^{k}=
\begin{pmatrix}
U_{k+1} & (-P U_{k+1}+V_{k+1})/2 \\
U_{k} & (-P U_{k}+V_{k})/2
\end{pmatrix}.
\eeq
It is well known that $V_k=P U_k-2 Q U_{k-1}$. Then
\beq\label{fM1}
M^{k}=
\begin{pmatrix}
U_{k+1} & -Q U_{k} \\
U_{k} & -Q U_{k-1}
\end{pmatrix}=\begin{pmatrix}
U_{k+1} & -Q U_{k} \\
U_{k} & U_{k+1}-P U_{k}
\end{pmatrix}
\eeq
since $U_{k+1}=P U_k-Q U_{k-1}$ and therefore
\beq\label{fM2}
M^k=U_{k+1}\begin{pmatrix}
1 & 0\\
0 & 1
\end{pmatrix}+U_k\begin{pmatrix}
0 & -Q\\
1 & -P
\end{pmatrix}=U_{k+1}I+U_k A,
\eeq
where
\beq\label{fM3}
A=\begin{pmatrix}
0 & -Q\\
1 & -P
\end{pmatrix}.\eeq
It follows from $(\ref{M})$  that if $P$ is replaced by $-P$ in $R$, then we obtain $A$.
In addition, if we use $V_k=P U_k-2 Q U_{k-1}$ in the matrix formula $(\ref{M})$, then
\beq\label{fM4}
R^{m}=
\begin{pmatrix}
-Q U_{m-1}(P,Q) & -Q U_m(P,Q) \\
U_{m}(P,Q) & U_{m+1}(P,Q)
\end{pmatrix}.
\eeq
By the Binet formulas, it can be seen that $U_m(-P,Q)=(-1)^{m+1}U_m(P,Q)$. Then
\beq\label{fM5}
A^{m}=\begin{pmatrix}
(-1)^{m+1}Q U_{m-1}(P,Q) & (-1)^{m}Q U_m(P,Q) \\
(-1)^{m+1}U_{m}(P,Q) & (-1)^{m}U_{m+1}(P,Q)
\end{pmatrix}.
\eeq
Thus
\beq\label{fM6}
M^{mk}=(M^k)^m=(U_{k+1}I+U_k A)^m=\sum_{i=0}^{m}\binom{m}{i}U_k^i A^i U_{k+1}^{m-i}.\eeq
Since
\beq\label{fM7}
M^{mk}=\begin{pmatrix}
* & * \\
U_{mk} & *
\end{pmatrix}\,\,\, \text{ and }\,\, A^{i}=\begin{pmatrix}
* & * \\
(-1)^{i+1}U_{i} & *
\end{pmatrix},
\eeq
the representation $(\ref{Rep})$ follows from $(\ref{fM6})$.
Let $\operatorname{tr}(X)$ denote trace of the matrix $X$. From $(\ref{fM1})$ and $(\ref{fM6})$, it is seen that
\begin{align}\label{fM8}
\operatorname{tr}(M^{mk})&=U_{mk+1}-Q U_{mk-1}=V_{mk}=\operatorname{tr}\left(\sum_{i=0}^{m}\binom{m}{i}U_k^i A^i U_{k+1}^{m-i}\right)\nonumber\\
&=\sum_{i=0}^{m}\binom{m}{i}U_k^i \operatorname{tr}\left(A^i\right) U_{k+1}^{m-i}.\end{align}
By $(\ref{fM5})$, $\operatorname{tr}\left(A^i\right)=(-1)^i(-QU_{i-1}+U_{i+1})=(-1)^i V_i$. Using this in $(\ref{fM8})$, we get $(\ref{Rep1})$.

Let $a_i(P,Q)=\binom{m}{i}(-1)^{i+1}U_i$, then $U_{mk}=\sum_{i=0}^{m}a_i(P,Q)U_k^i U_{k+1}^{m-i}.$ Suppose that there is another representation in the analogical form $U_{mk}=\sum_{i=0}^{m}b_i(P,Q)U_k^i U_{k+1}^{m-i}.$
Then $\sum_{i=0}^{m}(b_i-a_i)U_k^i U_{k+1}^{m-i}=0$. We can take $P=2, Q=1$; in this case $U_k=k$. Therefore, we have $\sum_{i=0}^{m}(b_i-a_i)k^i (k+1)^{m-i}\equiv0\pmod{ k}$, and hence $k\mid (b_0-a_0)$. This must hold for any $k>0$, and  is possible only if $b_0=a_0$. By analogy we prove that $b_i=a_i$ for $0< i\leq m$. So the representation $(\ref{Rep})$ is unique. The uniqueness of the representation $(\ref{Rep1})$ can be proved similarly.
\end{proof}
We note that the study fits in the literature. For a generalization of $(\ref{Rep})$ see Remark $4.1$ in \cite{00}. When $P^2-4Q$ is not a perfect square (see \cite{0}, Corollary $2.10$), both identities are proved.

Now we consider another way of obtaining the identity $(\ref{Rep})$.
We can use a method similar to the partial fraction decomposition. We begin with the supposed identity $U_{mk}=\sum_{i=0}^{m} a_i(P,Q) U_k^i U_{k+1}^{m-i}$.  Then we get the system of $m+1$ equations whose variables and coefficients are functions of $P, Q$:
\beq\label{System}
U_{mk}=\sum_{i=0}^{m}a_i(P,Q)U_k^i U_{k+1}^{m-i}\,\,\,\, \left( -\left\lfloor (m-1)/2\right\rfloor\leq k\leq \left\lceil m/2\right\rceil \right).
\eeq
Since $U_0=0$, $U_1=1$, it is seen that $a_0=0$, $a_m=(-1)^{m+1}U_m$. The remaining coefficients can be found by solving the system.

\noindent{\bf Example 1.} $U_{3k}=a_0 U_{k+1}^3+a_1 U_{k+1}^2 U_{k}+a_2 U_{k+1} U_{k}^2+a_3 U_{k}^3$. The system $(\ref{System})$ for this case is:
\beq\begin{cases}
U_{-3}=a_0 U_{0}^3+a_1 U_{0}^2 U_{-1}+a_2 U_{0} U_{-1}^2+a_3 U_{-1}^3,\\
U_{0}=a_0 U_{1}^3+a_1 U_{1}^2 U_{0}+a_2 U_{1} U_{0}^2+a_3 U_{0}^3,\\
U_{3}=a_0 U_{2}^3+a_1 U_{2}^2 U_{1}+a_2 U_{2} U_{1}^2+a_3 U_{1}^3,\\
U_{6}=a_0 U_{3}^3+a_1 U_{3}^2 U_{2}+a_2 U_{3} U_{2}^2+a_3 U_{2}^3.
\end{cases}\eeq
Using $U_{-3}=-(P^2-Q)/Q^3$, $U_{-1}=-1/Q$, $U_0=0$, $U_{1}=1$, $U_{3}=P^2-Q$, $U_{6}=P^5-4P^3Q+3P Q^2$, we get the solution $a_0=0$, $a_1=3$, $a_2=-3P$, $a_3=P^2-Q$. So $U_{3k}=3 U_{k+1}^2 U_{k}-3P U_{k+1} U_{k}^2+(P^2-Q) U_{k}^3.$ This is consistent with  $(\ref{Rep})$.

\noindent{\bf Example 2.} Consider the  Fibonacci Pythagorean triples identity \cite{1.12}.

\smallskip
\noindent$(F.8)$\phantom{$(GF)$}$(F_{k-1}F_{k+2})^2+(2F_{k}F_{k+1})^2=F_{2k+1}^2$.

\smallskip
\noindent
Suppose that there exists an identity of the form
\beq\label{Sup1}c_1(U_{k-1}U_{k+2})^2+c_2(U_{k}U_{k+1})^2+c_3 U_{2k+1}^2=0.\eeq
To get the system for the variables $c_i$ we put $k=-1, 0, 1$. Then
\beq\label{Syst2}\begin{cases}
c_1 P^2/Q^4+c_3/Q^2=0,\\
c_1 P^2/Q^2+c_3=0,\\
c_2 P^2+c_3(P^2-Q)^2=0.
\end{cases}\eeq
If the system has no solutions (the rank is $3$), then we can state that the identity of the form $(\ref{Sup1})$ does not exist. In this case we may modify it by adding new terms and obtain a new system.
In fact, the rank is $2$. We put $c_3=-1$, then the solution is $c_1=Q^2/P^2, c_2=(P^2-Q)^2/P^2, c_3=-1$. But when we use the Binet formulas to verify the resulting formula  we see that it is not valid. Moreover, if we get the system which corresponds  $k=0, 1, 2$, then the determinant of the system matrix is $-2 P^4 \left(P^2-2 Q\right) \left(P^2-Q\right) \left(P^2+Q\right)/Q$. Thus, the solution exists if one of the following holds: $P=0, P^2=2 Q, P^2=\pm Q$. But if we add $c_4U_{k-1}U_{k}U_{k+1}U_{k+2}$ to the left side of $(\ref{Sup1})$, then we find the generalization of $(F.8)$ as follows:

\smallskip
\noindent$(GF.8)$\phantom{$(F)$}$(Q U_{k-1}U_{k+2})^2+((P^2-Q)U_{k}U_{k+1})^2=(P U_{2k+1})^2+$

\smallskip
\par \hfill \noindent\phantom{$(GF.8)(F)(Q U_{k-1}U_{k+2})^2+((P^2-Q)U_{k}U_{k+1})^2=$}\,$2Q (P^2+Q)U_{k-1}U_{k}U_{k+1}U_{k+2}$.

\smallskip
\noindent{\bf Remark.} In general, we cannot assert that the method leads to the final result which holds for all $k$ since a supposed formula similar to $(\ref{Sup1})$  is checked by a system only for some values of $k$. So we need to use  the Binet formulas  to prove that the final result is valid for all $k$. In Example $1$ this check is not necessary, since by Theorem \ref{T1} we know that the identity which involves $U_{mk}$, $U_{k}$, $U_{k+1}$ exists. But we obtained the unique solution using the method.

\noindent{\bf Example 3.} We want to get an identity of the form
\beq\label{Sup2}c_1 U_{k+1}^2+c_2U_{k}^2+c_3 U_{k-1}^2=0.\eeq
We put $k=-1, 0, 1$ and obtain the following system
\beq\label{System2}\begin{cases}
c_2 /Q^2+c_3 P^2/Q^4=0,\\
c_1+c_3/Q^2=0,\\
c_1 P^2+c_2=0.
\end{cases}\eeq
Since the determinant of the system matrix is $2P^2/Q^4$, we conclude that for a nonzero $P$ there is no identity which contains only squares of three consecutive terms of $\{U_k\}$. But if we try to find an identity of the form
\beq\label{Sup3}c_1 U_{k+1}^2+c_2U_{k}^2+c_3 U_{k-1}^2+c_4 U_{k-2}^2=0,\eeq
then we obtain

\smallskip
\noindent$(GF.9)$\phantom{$(F)$}$U_{k+1}^2-Q^3 U_{k-2}^2=(P^2-Q)(U_{k}^2-Q U_{k-1}^2)$.

\smallskip
\noindent If $P=1$ and $Q=-1$, then we get the identity  for the Fibonacci numbers:

\smallskip
\noindent$(F.9)$\phantom{$(GF)$}$F_{k+1}^2+F_{k-2}^2=2(F_{k}^2+F_{k-1}^2)$.

\smallskip
As shown the method is good not only for generalizing  Fibonacci identities, but also for finding new identities. Below we present some results that we obtained using this method. Some of them are well-known.

\smallskip
\noindent$(F.10)$\phantom{$(GF$}$F_{2k}=F_{k+1}^2- F_{k-1}^2$,

\smallskip
\noindent$(GF.10)$\phantom{$(F$}$P U_{2k}=U_{k+1}^2- Q^2 U_{k-1}^2,$

\smallskip
\noindent$(F.11)$\phantom{$(GF$}$F_{k+1}^2=4F_{k}F_{k-1}+F_{k-2}^2$,

\smallskip
\noindent$(GF.11)$\phantom{$(F$}$U_{k+1}^2=2P(P^2-Q)U_k U_{k-1}+(Q^2-P^4) U_{k-1}^2+P^2Q^2U_{k-2}^2$,

\smallskip
\noindent$(F.12)$\phantom{$(GF$}$F_{k+2}^2-F_{k-2}^2=3(F_{k+1}^2-F_{k-1}^2)$,

\smallskip
\noindent$(GF.12)$\phantom{$(F$}$U_{k+2}^2-Q^4U_{k-2}^2=(P^2-2Q)(U_{k+1}^2-Q^2U_{k-1}^2)$.

\smallskip
\noindent
Note that $(GF.9)$ and $(GF.12)$ have similar forms. To generalize $(GF.12)$ consider $c_1 U_{k+m}^2+c_2 U_{k+l}^2+c_3U_{k-l}^2+c_4U_{k-m}^2=0$. With the help of the method we get the following system:
\beq\label{System2.1}\begin{cases}
c_1 U_{m-1}^2+c_2 U_{l-1}^2+c_3 U_{l+1}^2/Q^{2(l+1)}+c_4 U_{m+1}^2/Q^{2(m+1)}=0,\\
c_1 U_{m}^2+c_2 U_{l}^2+c_3 U_{l}^2/Q^{2l}+c_4 U_{m}^2/Q^{m}=0,\\
c_1 U_{m+1}^2+c_2 U_{l+1}^2+c_3 U_{l-1}^2/Q^{2(l-1)}+c_4 U_{m-1}^2/Q^{2(m-1)}=0.
\end{cases}\eeq
The rank is $2$. If we put $c_1=U_{l+1}^2-Q^2 U_{l-1}^2$, then $c_2=-(U_{m+1}^2-Q^2 U_{m-1}^2)$, $c_3=Q^{2l}(U_{m+1}^2-Q^2 U_{m-1}^2)$, $c_4=-Q^{2m}(U_{l+1}^2-Q^2 U_{l-1}^2)$. Using $(GF.10)$, we obtain

\smallskip
\noindent$(GF.13)$\phantom{$(F$}$U_{2l}(U_{k+m}^2-Q^{2m}U_{k-m}^2)=U_{2m}(U_{k+l}^2-Q^{2l}U_{k-l}^2)$,

\smallskip
\noindent$(F.13)$\phantom{$(GF$}$F_{2l}(F_{k+m}^2-F_{k-m}^2)=F_{2m}(F_{k+l}^2-F_{k-l}^2)$.

\smallskip
\noindent Another way to prove $(GF.13)$ is to  use Catalan's identity $(GF.2)$. It follows that $U_{k+m}^2-Q^{2m}U_{k-m}^2=U_{2k}U_{2m}$, $U_{k+l}^2-Q^{2l}U_{k-l}^2=U_{2k}U_{2l}$, which completes the proof.

To generalize  $(GF.9)$ we consider the most general formula which involves  only four squares of sequence terms: $U_{k}^2+c_1 U_{k+m}^2+c_2U_{k+l}^2+c_3U_{k+n}^2=0$. Using the method, we get

\smallskip
\noindent$(GF.14)$\phantom{$(F$}$U_{k}^2=U_{k+m}^2U_{l}U_{s}/(Q^{2m}U_{l-m}U_{s-m})+U_{k+l}^2U_{s}U_{m}/(Q^{2l}U_{s-l}U_{m-l})+$

\smallskip
\par \hfill \noindent\phantom{$(GF.12)(F$}$U_{k+s}^2U_{m}U_{l}/(Q^{2s}U_{m-s}U_{l-s})$,

\smallskip
\noindent$(F.14)$\phantom{$(GF$}$F_{k}^2=F_{k+m}^2F_{l}F_{s}/(F_{l-m}F_{s-m})+F_{k+l}^2F_{s}F_{m}/(F_{s-l}F_{m-l})+$

\smallskip
\par \hfill \noindent\phantom{$(GF.14)(F$}$F_{k+s}^2F_{m}F_{l}/(F_{m-s}F_{l-s})$.

\smallskip
\noindent Here we mean that $l, m, s$ are  distinct integers. The analog for cubes is

\smallskip
\noindent$(GF.15)$\phantom{$(F$}$U_{k}^3=U_{k+m}^3U_{l}U_{p}U_{s}/(Q^{3m}U_{l-m}U_{p-m}U_{s-m})+$

\smallskip
\par \hfil \noindent\phantom{$FF)$}$U_{k+l}^3U_{m}U_{p}U_{s}/(Q^{3l}U_{m-l}U_{p-l}U_{s-l})+$\hfil

\smallskip
\par \hfil \noindent\phantom{$(GF.12.GFFFFFFF)(F$}$U_{k+p}^3U_{l}U_{m}U_{s}/(Q^{3p}U_{l-p}U_{m-p}U_{s-p})+$\hfil

\smallskip
\par \hfill \noindent\phantom{$(GF.12)(F$}$U_{k+s}^3U_{l}U_{m}U_{p}/(Q^{3s}U_{l-s}U_{m-s}U_{p-s}).$

\smallskip
\noindent Similar identities on sums of powers of the terms of the Lucas sequences can be found in \cite{2000,1999}.
\begin{theorem}\label{T2}
Let $\{U_{k}\}$ be the Lucas sequence with parameters $P, Q$ in an arbitrary field $\mathbb{F}$. Let $d_i$ $(0\leq i\leq n)$ be distinct integers. Then the following holds:
\beq \label{Lagrange}
U_{k+x}^n=\sum_{i=0}^{n}U_{k+d_i}^n\prod_{\substack{j=0\\ j\not=i}}^{n}\frac{U_{x-d_j}}{U_{d_i-d_j}}.
\eeq
\end{theorem}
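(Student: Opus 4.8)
The plan is to read $(\ref{Lagrange})$ as classical Lagrange interpolation after a change of variable. Throughout assume $\Delta\neq 0$, so $(\ref{BF})$ holds and $\alpha\bar{\alpha}=Q\neq 0$ forces $\alpha,\bar{\alpha}\neq 0$; put $\rho=\bar{\alpha}/\alpha$ and $u=\rho^{x}$. The crucial point is that each side of $(\ref{Lagrange})$, once divided by the common factor $\alpha^{nx}$, is a polynomial of degree at most $n$ in $u$. For the left side, $(\ref{BF})$ gives $U_{k+x}=\alpha^{k+x}(1-\rho^{k+x})/(\alpha-\bar{\alpha})$, whence
\[
U_{k+x}^{\,n}=\alpha^{nx}\,P(u),\qquad P(u)=\frac{\alpha^{nk}}{(\alpha-\bar{\alpha})^{n}}\,(1-\rho^{k}u)^{n}.
\]
For the right side, the same factorisation of each factor yields $U_{x-d_j}=\dfrac{\alpha^{x}}{\alpha-\bar{\alpha}}\bigl(\alpha^{-d_j}-\bar{\alpha}^{-d_j}u\bigr)$, so each summand equals $\alpha^{nx}$ times a polynomial of degree $\le n$ in $u$. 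Hence the right side is $\alpha^{nx}R(u)$ with $R=\sum_{i}U_{k+d_i}^{\,n}\,\ell_i$, where $\ell_i(u)=\bigl((\alpha-\bar{\alpha})^{n}\prod_{j\neq i}U_{d_i-d_j}\bigr)^{-1}\prod_{j\neq i}\bigl(\alpha^{-d_j}-\bar{\alpha}^{-d_j}u\bigr)$ has degree $\le n$. Cancelling $\alpha^{nx}$, the theorem reduces to the polynomial identity $P=R$.

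Next I would evaluate both polynomials at the $n+1$ nodes $u=\rho^{d_0},\dots,\rho^{d_n}$. A one-line computation gives $\alpha^{-d_j}-\bar{\alpha}^{-d_j}\rho^{d_i}=\alpha^{-d_i}(\alpha-\bar{\alpha})U_{d_i-d_j}$, from which $\ell_{i'}(\rho^{d_i})=\delta_{i i'}\alpha^{-nd_i}$: for $i'\neq i$ the product over $j\neq i'$ now contains the factor $U_{d_i-d_i}=U_0=0$, which is exactly the Kronecker-delta mechanism of Lagrange interpolation. Therefore $R(\rho^{d_i})=\alpha^{-nd_i}U_{k+d_i}^{\,n}$, and the same factorisation gives $P(\rho^{d_i})=\alpha^{nk}(1-\rho^{k+d_i})^{n}/(\alpha-\bar{\alpha})^{n}=\alpha^{-nd_i}U_{k+d_i}^{\,n}$ as well. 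The nodes $\rho^{d_0},\dots,\rho^{d_n}$ are pairwise distinct exactly when $\rho^{d_i-d_j}\neq1$, i.e. $U_{d_i-d_j}\neq0$ — which is precisely the condition that makes the right side of $(\ref{Lagrange})$ well-defined. Since two polynomials of degree $\le n$ agreeing at $n+1$ distinct points coincide, $P=R$, and $(\ref{Lagrange})$ follows.

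The only nonroutine step is the first one: recognising that stripping off the factor $\alpha^{nx}$ converts $U_{k+x}^{\,n}$ — an exponential, not a polynomial, in $x$ — into an honest degree-$n$ polynomial in $u=\rho^{x}$, which is what legitimises a Lagrange argument at all. After that the node evaluations are mechanical and are driven entirely by $U_0=0$, while distinctness of the nodes is handed to us by the well-definedness hypothesis. The remaining case $\Delta=0$ (and the case of a finite field) I would settle by specialisation: after clearing the denominators $U_{d_i-d_j}$, identity $(\ref{Lagrange})$ is an equality of Laurent polynomials in $P,Q$ (elements of $\mathbb{Z}[P,Q,Q^{-1}]$, interpreting negative indices via $U_{-m}=-U_m/Q^m$); it holds over $\mathbb{Q}(\alpha,\bar{\alpha})$ with $\alpha,\bar{\alpha}$ independent transcendentals, where $\Delta\neq0$ and every $U_m$ with $m\neq0$ is nonzero, so it holds as a formal identity and then specialises to any field and any admissible $P,Q$.
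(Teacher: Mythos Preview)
Your argument is correct and is essentially the paper's own proof: both factor out the exponential part (the paper pulls out $\bar{\alpha}^{xn}$ with $y=(\alpha/\bar{\alpha})^x$, you pull out $\alpha^{nx}$ with $u=(\bar{\alpha}/\alpha)^x$), reduce $(\ref{Lagrange})$ to an equality of degree-$n$ polynomials in the new variable, and verify agreement at the $n+1$ nodes $x=d_0,\dots,d_n$ via $U_0=0$. Your treatment is in fact more careful than the paper's, since you check explicitly that the nodes are distinct (equivalent to $U_{d_i-d_j}\neq0$) and you close the $\Delta=0$/finite-field gap by a specialisation argument that the paper omits.
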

\begin{proof}
We denote $(\alpha/\bar{\alpha})^x=y$. Then
\beq\label{Lag1} U_{k+x}^n=\bar{\alpha}^{x n}\left(\frac{\alpha^{k+x}/\bar{\alpha}^{x}-\bar{\alpha}^{k}}{\alpha-\bar{\alpha}}\right)^n=\bar{\alpha}^{x n}\left(\frac{\alpha^{k} y-\bar{\alpha}^{k}}{\alpha-\bar{\alpha}}\right)^n.\eeq
We see that $U_{k+x}^n$ is the product of $\bar{\alpha}^{x n}$ and the polynomial in $y$ of degree $n$ with coefficients in $\mathbb{F}$. Since $U_{x-d_j}=\bar{\alpha}^{x}(\alpha^{-d_j}y-\bar{\alpha}^{-d_j})/(\alpha-\bar{\alpha})$, the right side of $(\ref{Lagrange})$ has the same structure as $U_{k+x}^n$.
Since $U_0=0$, it easy to see that $(\ref{Lagrange})$ is valid for $x=d_i$ $(0\leq i\leq n)$.
Thus, the polynomials of degree $n$ on the left and right sides of $(\ref{Lagrange})$ are equal, since they are equal for $n+1$ different values of the variable.
\end{proof}
Note that $(\ref{Lagrange})$ is related to Lagrange interpolation.
If we put $x=0$, then after simple transformations we obtain an identity that generalizes $(GF.14)$ and $(GF.15)$:
\beq\label{Lag2}
U_k^n=\sum_{i=0}^{n}\frac{U_{k+d_i}^n}{Q^{n d_i}}\prod_{\substack{j=0\\ j\not=i}}^{n}\frac{U_{d_j}}{U_{d_j-d_i}}.
\eeq
As an example, we give some Fibonacci identities that can be obtained from  $(\ref{Lagrange})$:
\begin{align}\label{FIF}
  F_{k+2}^3+F_{k-2}^3 & =3\(F_{k+1}^3-F_{k-1}^3\)+6F_k^3, \\
  F_{k+3}^4-F_{k-3}^4 & =4(F_{k+2}^4-F_{k-2}^4)+20(F_{k+1}^4-F_{k-1}^4), \\
  F_{k+3}^5-F_{k-3}^5 & =8(F_{k+2}^5+F_{k-2}^5)+40(F_{k+1}^5-F_{k-1}^5)-60F_{k}^5.
\end{align}
\begin{corollary}
Let $p_0$, $p_1$ be nonzeros  in $\mathbb{F}$, and let the sequence $\{W_k(a_0,a_1;p_0,p_1)\}$ be  defined by the relation
$W_{k+2}=p_0 W_{k+1}+p_1 W_{k}$, with $W_0=a_0, W_1=a_1$, where $a_0,a_1\in\mathbb{F}$. Let $s$ be an integer such that $W_s=0$, and $d_i$ $(0\leq i\leq n)$ be integers such that $W_{d_i-d_j+s}\not=0$ for $i\not= j$. Then the following holds
\beq \label{Lagrange1}
W_{k+x}^n=\sum_{i=0}^{n}W_{k+d_i}^n\prod_{\substack{j=0\\ j\not=i}}^{n}\frac{W_{x-d_j+s}}{W_{d_i-d_j+s}}.
\eeq
\end{corollary}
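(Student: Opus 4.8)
The plan is to reduce the statement to Theorem 2 by exploiting the single hypothesis that distinguishes $\{W_k\}$ from a Lucas sequence: the existence of an index $s$ with $W_s=0$. First I would note that the recurrence $W_{k+2}=p_0 W_{k+1}+p_1 W_k$ has the same characteristic equation as the Lucas sequence $\{U_k\}$ with parameters $P=p_0$ and $Q=-p_1$, since $U_{j+2}=P U_{j+1}-Q U_j=p_0 U_{j+1}+p_1 U_j$. Writing $\lambda=W_{s+1}$, I would then compare $\{W_m\}$ with $g_m=\lambda U_{m-s}$: both satisfy the \emph{same} relation $f_{m+2}=p_0 f_{m+1}+p_1 f_m$, and they agree at the two consecutive indices $m=s$ and $m=s+1$, because $g_s=\lambda U_0=0=W_s$ and $g_{s+1}=\lambda U_1=\lambda=W_{s+1}$. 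Since $p_1\neq0$ the recurrence is invertible, so a solution is determined by any two consecutive values in both directions, forcing
\beq
W_m=W_{s+1}\,U_{m-s}\qquad\text{for all }m\in\mathbb{Z}.
\eeq

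With this identification the corollary becomes a direct substitution. Inserting $W_m=\lambda U_{m-s}$ into every term of $(\ref{Lagrange1})$ gives $W_{k+x}^n=\lambda^n U_{k+x-s}^n$ and $W_{k+d_i}^n=\lambda^n U_{k+d_i-s}^n$ for the power terms, while each Lagrange factor collapses to a Lucas ratio,
\beq
\frac{W_{x-d_j+s}}{W_{d_i-d_j+s}}=\frac{\lambda U_{x-d_j}}{\lambda U_{d_i-d_j}}=\frac{U_{x-d_j}}{U_{d_i-d_j}},
\eeq
the common factor $\lambda$ cancelling. The $n$ factors of the product contribute no net power of $\lambda$, so each side of $(\ref{Lagrange1})$ carries exactly one factor $\lambda^n$; dividing it out leaves precisely identity $(\ref{Lagrange})$ of Theorem 2 with $k$ replaced by $k-s$. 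Hence $(\ref{Lagrange1})$ holds.

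Two bookkeeping points need care rather than ingenuity. First, I must check that the non-vanishing hypotheses transfer: the assumption $W_{d_i-d_j+s}\neq0$ for $i\neq j$ reads $\lambda U_{d_i-d_j}\neq0$, which simultaneously guarantees $\lambda\neq0$ (so the cancellation above is legitimate and $\{W_k\}$ is not identically zero) and supplies exactly the non-degeneracy $U_{d_i-d_j}\neq0$ needed to invoke Theorem 2 for the distinct integers $d_i$. Second, I should confirm that $p_0,p_1\neq0$ matches $P,Q\neq0$, so the associated Lucas sequence is of the type covered by the earlier results. The only genuine idea is the reduction $W_m=W_{s+1}U_{m-s}$; once it is in place there is no real obstacle, merely the routine verification that the powers of $\lambda$ and the index shift by $s$ line up.
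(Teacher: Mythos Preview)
Your argument is correct. The key observation $W_m=W_{s+1}\,U_{m-s}(p_0,-p_1)$ is sound (it follows from the invertible recurrence and the two matching initial values at $m=s,s+1$), and once it is in place the substitution you perform reduces $(\ref{Lagrange1})$ exactly to $(\ref{Lagrange})$ with $k$ replaced by $k-s$. Your bookkeeping is also right: the hypothesis $W_{d_i-d_j+s}\neq0$ for some $i\neq j$ forces $\lambda=W_{s+1}\neq0$ and simultaneously gives $U_{d_i-d_j}\neq0$, so the $d_i$ are distinct and Theorem~\ref{T2} applies.

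Your route differs from the paper's. The paper invokes the general Horadam representation $W_k=a_1U_k(p_0,-p_1)+a_0p_1U_{k-1}(p_0,-p_1)$ and then indicates that one should \emph{rerun} the Binet/polynomial-interpolation argument of Theorem~\ref{T2} with $W$ in place of $U$. You instead exploit the specific hypothesis $W_s=0$ to collapse the two-term Horadam expression to a single scaled, shifted Lucas sequence, which lets you apply Theorem~\ref{T2} as a black box rather than reproving it. Your approach is tighter and makes clear exactly why the extra index shift by $s$ appears in $(\ref{Lagrange1})$; the paper's approach, on the other hand, would go through even without an index $s$ where $W$ vanishes (the vanishing is only needed so that the right-hand side equals the left at $x=d_i$), so it generalises more readily if one drops or weakens that hypothesis.
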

\begin{proof}
It is known \cite{1.11} that $W_k(a_0,a_1;p_0,p_1)=a_1U_k(p_0,-p_1)+a_0 p_1 U_{k-1}(p_0,-p_1)$.
The rest of the proof is analogous to the proof of Theorem 2.
\end{proof}
\smallskip
\noindent{\bf Acknowledgments.} I would like to thank K.A. Sveshnikov and R.M.  Kolpakov  for valuable suggestions. I am also very grateful to the referee for many helpful comments and suggestions. Finally, I want to thank  Max Alekseyev for pointing out the connection between  the identity $(\ref{Lag2})$ and Lagrange interpolation.

\medskip


\begin{thebibliography}{99}
\bibitem{1.9}
B.A. Brousseau, Ye olde Fibonacci curiosity shoppe, {\it Fibonacci Quart.} {\bf 10} (1972), 441-443.
\bibitem{1.3}
G. Candido,  A relationship between the fourth powers of the terms of the Fibonacci series, {\it Scripta Mathematica} {\bf 17}.3-4 (1951), 230.
\bibitem{1.1}
R. Euler and J. Sadek, Elementary problems and solutions, {\it Fibonacci Quart.} {\bf 44}.2 (2006), 374-375.
\bibitem{1.10}
S. Falcon and  A. Plaza, On the Fibonacci k-numbers, {\it Chaos Solitons Fractals} {\bf 32}.5 (2007), 1615-1624.
\bibitem{1.10.1}
M. Farrokhi D.G., Generalization of an identity involving the generalized Fibonacci numbers and its applications, {\it Integers} {\bf 9}.5 (2009), 497-513.
\bibitem{-2}
T.X. He and P.J.S. Shiue,   On sequences of numbers and polynomials defined by linear recurrence relations of order $2$, {\it Int. J. Math. Math. Sci.} (2009), Art. ID 709386, 21 pp.
\bibitem{2000}
Jr V. Hoggatt and M. Bicknell, Fourth power Fibonacci identities from Pascal's triangle, {\it Fibonacci Quart.} {\bf 2}.4 (1964), 261-266.
\bibitem{1.11}
A.F. Horadam,  Basic properties of a certain generalized sequence of numbers, {\it Fibonacci Quart.}  {\bf 3}.3 (1965), 161-176.
\bibitem{1.12}
A.F. Horadam,  Special properties of the sequence $W_n(a, b; p,q)$, {\it Fibonacci Quart.}
{\bf 5}.4 (1967), 424-434.
\bibitem{1.4}
F.T. Howard, Generalizations of a Fibonacci identity, in: {\it Applications of Fibonacci Numbers 8}, ed. F.T. Howard, Kluwer Academic Pub., Dordrecht,  1999, 201-211.
\bibitem{1.8}
M. Joye and  J.-J. Quisquater,  Efficient computation of full Lucas sequences, {\it Electronics Letters} {\bf32}.6 (1996), 537-538.
\bibitem{1.7}
D. Kalman and R. Mena, The Fibonacci numbers: exposed, {\it Math. Mag.} {\bf76}.3 (2003), 167-181.
\bibitem{1}
T. Koshy, {\it Fibonacci and Lucas Numbers with Applications}, Wiley, New York, 2001.
\bibitem{-1}
J.B. Muskat,  Generalized Fibonacci and Lucas sequences and rootfinding methods, {\it Math. Comp.} {\bf 61}.203 (1993), 365-372.
\bibitem{-3} S. Rabinowitz,  Algorithmic manipulation of second-order linear recurrences, {\it Fibonacci Quart.} {\bf 37} (1999), 162-176.
\bibitem{-4} S. Rabinowitz,  Algorithmic manipulation of Fibonacci identities, in: {\it Applications of Fibonacci Numbers 6}, ed. G.E. Bergum et al., Kluwer Academic Pub., Dordrecht, 1996, 389-408.
\bibitem{1.5}
C.A. Reiter,  Exact Horadam numbers with a Chebyshevish accent, {\it Vector} {\bf 16} (1999), 122-131.
\bibitem{13}
P.  Ribenboim, {\it My  Numbers,  My  Friends: Popular Lectures on Number Theory},  Springer,  New  York,  2000.
\bibitem{0}Z. \c{S}iar and R. Keskin, Some new identities concerning generalized Fibonacci and Lucas numbers, {\it Hacet. J. Math. Stat.} {\bf 42}.3 (2013), 211-222.
\bibitem{00} Z.H. Sun, Linear recursive sequences and the powers of matrices, {\it Fibonacci Quart.} {\bf 39}.4 (2001), 339-351.
\bibitem{1999} Y. Yazlik, N. Yilmaz, and N. Taskara, On the sums of powers of k-Fibonacci and k-Lucas sequences, {\it Selcuk J. Appl. Math.} (2012), 47-50.
\bibitem{1.6}
R. Zatorsky and T. Goy,  Parapermanents of triangular matrices and some general theorems on number sequences, {\it J. Integer Seq.} {\bf 19}.2 (2016), Article 16.2.2, 23 pp.
\end{thebibliography}
\end{document}